\theoremstyle{plain}
\newtheorem{theorem}{Theorem}
\newtheorem{proposition}[theorem]{Proposition}
\theoremstyle{definition}
\newtheorem{definition}[theorem]{Definition}
\title{Bi-Objective Optimization over the Efficient Set of Multi-Objective Integer Quadratic Problem.}
\author{BENCHEIKH Ali $^1$ \and MOULAÏ Mustapha $^2$ \and BADAOUI Ilias $^1$}
\date{
	$^1$ USTHB, Dept. of Operations Research, Faculty of Mathematics, BP32 El-Alia, 16111, Algeria \\
	$^2${USTHB, LaROMaD Laboratory, Faculty of Mathematics, BP32 El-Alia, 16111, Algeria.
	%	\today
}}
\begin{document}
	\maketitle
	
	\begin{abstract}
		In this paper, we present an exact algorithm for  optimizing two linear fractional over the efficient set of a multi-objective integer quadratic problem. This type of problems arises when two decision-makers, such as firms, each have a preference function to optimize over the efficient set of a multi-objective problem. The algorithm employs a branch-and-cut approach, which involves: (1) exploring the solution space using a branch-and-bound strategy in the decision space, and (2) eliminating inefficient solutions using a cutting plane technique with efficient cuts constructed from the non-increasing directions of objective functions. Additionally, integral tests are incorporated to further ensure the efficiency of the obtained solutions.We present a comprehensive example, accompanied by a step-by-step resolution, to demonstrate the functioning of the algorithm.
		
		\noindent\textbf{Keywords:} Multi-objective programming, integer programming, quadratic programming, fractional programming, branch and cut.\\
		\noindent\textbf{Subject class:} Primary: 90C29, 90C10, 90C20, 90C32; Secondary:  90C57.
	\end{abstract}
	
	\section{Introduction}
	Multi-objective optimization serves as a powerful tool for resolving problems characterized by multiple conflicting objectives that need simultaneously optimized within specific circumstances. This class of problems frequently manifests in various real-world domains, including economics, engineering, supply chain management, and product planning. However, the resolution of multi-objective problems $(MOP) $ results in a trade-off between the objectives referred to as the "efficient set". Many studies have addressed $(MOP)$ cases using diverse methods, we can cite research work in the case when the objectives are modeled by linear functions:  \textcolor{blue}{(Sylva, J. \& Crema, A.)}\cite{sylva2004method}, \textcolor{blue}{(Chergui, M. et al)}\cite{chergui2008solving},  \textcolor{blue}{(Boland, N. et al)}\cite{boland2016shape}
	, \textcolor{blue}{(Lokman, B. \& K\"{o}ksalan, M.)}  \cite{lokman2013finding}. Further, for the nonlinear case, we cite: \textcolor{blue}{(Costa, J.)}\cite{costa2007computing}, \textcolor{blue}{(Miettinen, K.)}\cite{miettinen2001some}, \textcolor{blue}{(Kornbluth, J. \& Steuer, R.)}\cite{kornbluth1981multiple},     \textcolor{blue}{(Oua\"{i}l, F. \& Chergui, M.)}\cite{ouail2018branch}, \textcolor{blue}{(Chergui, M. \& Moula\"{i}, M.)}\cite{chergui2008exact}.

	Nevertheless, the search for generating all efficient solutions in $(MOP)$ faces operational issues in certain scenarios and encounters two specific challenges: (1) an excess of solutions, confuse decision-makers (DM) for selecting the best solution according to their preferences, and (2) computational burdens, especially when dealing with restrictions imposed on the $MOP$ model (e,g non-linear model, discrete decision variables). To address these issues, decision-maker preferences are modeled with a \emph{preference function} to be optimized over the effective set of $(MOP)$, allowing for targeted selection of the optimal solution without exhaustive enumeration. This concept is called "optimization over the effective set" and has been receiving more attention from researchers recently. In fact, we can cite for example studies where the $(MOP) $ and the preference function are linear functions: (continuous and discrete): \textcolor{blue}{(Benson, H.)}\cite{benson1984optimization}, \textcolor{blue}{(Ecker, J. \& Song, J.)} \cite{ecker1994optimizing}, \textcolor{blue}{(Jorge, J. )} \cite{jorge2009algorithm},  \textcolor{blue}{(Boland, N. et al)}\cite{boland2017new},  \textcolor{blue}{(Lokman, B. )} \cite{lokman2019optimizing}. However, the preference function (and/or) the multi-objective problem are not always linear which makes the solution process more complicated, we cite for example but not limited to: (\textcolor{blue}{Ozlen,M. et al}) \cite{ozlen2013optimising} who proposed an algorithm to optimize a nonlinear preference function in multi-objective integer programming. Similarly, \textcolor{blue}{( Moula\"{\i}, M. \& Drici, W.)} \cite{moulai2018indefinite} provided an algorithm for indefinite quadratic optimization on an efficient integer set, and more recently, \textcolor{blue}{(Chaiblaine, Y. \& Moula\"{\i}, M.)}\cite{chaiblaine2021exact} proposed an algorithm for the optimization of a quadratic function on the efficient set of multi-objective convex problems in fractional integers.

	From a decision-making perspective, consider a scenario involving two associated decision-makers, each expressing their own preference function and aiming to optimize it over a given efficient set. The solution to this problem yields a subset of the efficient set.
	This type of problem was first introduced in the literature by \textcolor{blue}{(Cherfaoui, Y. \& Moula\"{\i}, M.)}\cite{cherfaoui2021biobjective}, who proposed an algorithm for optimizing two linear preference functions within a linear multi-objective integer problem $(BL/MOILP)$. Subsequently,\textcolor{blue}{( Chaiblaine, Y. et al)}\cite{chaiblaine2020exact} addressed the same problem, focusing on the optimization of two fractional linear preference functions within a multi-objective fractional linear integer problem $(BLF/MOILFP)$. Following these in this paper, we extend the scope by considering the optimization of two fractional linear preference functions  over the efficient set of a multi-objective integer quadratic problem denoted $(BLF/MOIQP)$.

	The proposed algorithm is based on the branch-and-cut principle, navigating the search tree to obtain optimal solutions through the iterative resolution of more constrained linear fractional programs within an augmented Simplex tableau.  At each iteration, either a branching process is applied to reach the integrality of decision variables. Or, if the current optimal solution already possesses integer values,  it is subjected to efficiency tests to evaluate its potential for inclusion among the $(BLF/MOIQP)$ solutions. To speed up the process,  we exploit the decreasing directions of criteria from both the $(MOIQP)$ \cite{ouail2018branch} and the $(BOILFP)$ \cite{chergui2008exact} to construct efficient cuts. These cuts effectively eliminate a significant number of inefficient integer solutions, avoiding the need to enumerate all efficient solutions for both problems.
	
	The rest of the article is organized as follows: in section 2, the problem formulation and notations are given, followed by the theoretical background and main results in section 3.
	In section 4, we describe the proposed algorithm and provide an illustrated example. Finally, we conclude the article and discuss perspectives in the $5^{th}$ section.
	
	\section{Problem formulation \& Notations}
	\subsection{Problem formulation}
	
	Consider $ r \;  ( r \ge 2 )$ convex quadratic objective functions of the form:
	$$ f_i(x)=\frac{1}{2}x^TQ_ix+c_i^Tx \quad i=1,\ldots,r$$
	and define the set
	$\mathcal{D} =\mathcal{X}\cap \mathbb{Z}^n, \quad \text{where} \quad \mathcal{X}=\{x\in \mathbb{R}_+^n | Ax\le b  \}.$\\
	The multi-objective integer quadratic problem $(MOIQP)$ is then formulated as
	\[ (MOIQP)\begin{cases}
		\text{"Min"}\, f_i(x) \quad i=1,\ldots,r\\x\in \mathcal{D}
	\end{cases}
	\]
	where:
	\begin{itemize}
		\item $Q_i \in \mathbb{Z}^{n\times n},\; i=1,\ldots,r$ are positive semi-definite matrices, and $c_i \in \mathbb{Z}^n,\;i=1,\ldots,r$ are vectors.
		\item $A\in  \mathbb{Z}^{m\times n}$ is a matrix, and $b \in \mathbb{Z}^m$ is a vector.
		\item $\mathcal{X}$ is assumed to be a closed, bounded, non-empty convex polyhedron.
		
	\end{itemize}
	We denote the efficient set of $(MOIQP)$ by $\mathcal{X}_{\mathcal{Q}}$.
	The bi-objective integer fractional program $(BOILFP)$, on the other hand, is modeled as:
	\[ (BOILFP)\begin{cases}
		\text{"Min"}\, \psi^s(x)=\dfrac{p^sx+\alpha^s}{q^sx^T+\beta^s} \quad s=1,2
		\\x\in \mathcal{D}
	\end{cases} \]
	where:
	\begin{itemize}
		\item $\psi^s(x)$ is a linear fractional preference function, $ s=1,2$.
		\item $p^s,\,q^s$ are real $n$-vectors
		\item $\alpha^s,\,\beta^s$ are real constants.
		
	\end{itemize}
	The efficient set of $(BOILFP)$ is denoted by $\mathcal{X}_{\mathcal{F}}$.\\
	Correspondingly, the main problem addressed in this work is the bi-objective linear fractional optimization over the efficient set of $(MOIQP)$, stated as:
	\[(BLF/MOIQP) \begin{cases} \text{"Min"}\, \psi^s(x) \quad s=1,2\\
		x \in \mathcal{X}_{\mathcal{Q}} \end{cases}.\]
	The goal is to find the set  $\mathcal{X}_{Eff}\subset \mathcal{X}_{\mathcal{Q}}$ of solutions that are efficient with respect to both $(MOIQP)$ and $(BOILFP)$.
	
	We assume that: \begin{itemize}
		\item $\psi^s(x)$ is not a strictly positive combination of linear parts of quadratic function objectives.
		\item The factors $q^sx^T+\beta^s, \; s=1,2$ are positive for all $x \in\mathcal{X}$.
		\item   $\mathcal{X}_{\mathcal{Q}} \neq \mathcal{D}$, ensuring the problem is non-trivial.
	\end{itemize}
	\begin{definition}
		A feasible solution $x^*$ is said to be \emph{efficient} in $(MOIQP)$ if, and only if, there does not exist $x \in \mathcal{D}$ such that $f_i(x)\le f_i(x^*)$ for all $i=1,\ldots,r$, and $f_i(x)<f_i(x^*)$ for at least one index $i$.
	\end{definition}
	
	The adopted strategy involves generating an optimal integer solution for the main problem $(BLF/MOIQP)$ by  solving a sequence of continuous, more constrained linear fractional problems $(LFP)_l$ at each iteration $l,\,l\ge 0$, defined as:
	\[(LFP)_l \begin{cases} \text{"Min"}\, \psi^1(x)=\dfrac{p^1x^t+\alpha^1}{q^1x^t+\beta^1} \\
		x \in \mathcal{X}_{l} \end{cases}\]
	where $\mathcal{X}_0=\mathcal{X}$ and $\mathcal{X}_{l+1}\subset \mathcal{X}_l$ is to be explored at iteration $l$.
	
	\subsection{Notations}
	To facilitate a better understanding of the theoretical results, we provide essential notations:
	
	We denote by $ \mathcal{B}_l$ and $ \mathcal{N}_l$ the index sets of basic and non-basic variables in the simplex table, respectively.
	
	$x^{*(l)}$ represents the optimal solution of the problem $(LFP)_l$ corresponding to the basis $B_l$ obtained at node $l$ in the tree. Additionally, we denote:
	
	\begin{itemize}
		\item \textbf{For objective functions of $(MOIQP)$:}
		The functions $ f_i, i=1,\ldots,r $ can be expressed in the neighborhood of $x^{*(l)} $ as:
		\[f_i(x)=f_i(x^{*(l)})+\nabla f_i(x^{*(l)})(x-x^{*(l)})+\epsilon_i \lVert x-x^{*(l)}\rVert
		\]
		where $  \nabla f_i(x^{*(l)})= Q_ix^{*(l)}+c_i \; \text{and }\epsilon_i:\mathbb{R}^n\longrightarrow \mathbb{R}$.
		
		The linear expression $ \nabla f_i(x^{*(l)})(x-x^{*(l)}) $ helps determine the non-increasing criterion directions of the edges of the relaxed feasible region originating from the extreme point $x^{*(l)}$.
		
		\item \textbf{For preference functions of $(BOILFP)$}:
		We also denote by $\bar{\gamma}^s_j \;\text{for}\,s=1,2$ the $j^{th}$ component of the reduced gradient of the vector $\bar{\gamma^s}$ of the preference function $\psi^s$ at each iteration $l$ with the following relation:
		\[\bar{\gamma}^{s}_j=\mathcal{Q}^{s}(x)\eta_{j}^s-\mathcal{P}^{s}(x)\vartheta_{j}^s .\]
		where:
		\begin{itemize}
			\item  $\eta_{j}^s =p^{s}_j-p^s_{\mathcal{B}_l}\mathcal{B}_l^{-1}a_j;\;\vartheta_{j}^s =q^{s}_j-q^s_{\mathcal{B}_l}\mathcal{B}_l^{-1}a_j;$ and $a_j$ is the $j^{th}$ column of matrix $A$.
			\item $\mathcal{P}^s(x^{*(l)})=p^{sT}x^{*(l)}+\alpha^s$ and $\mathcal{Q}^s(x^{*(l)})=q^{sT}x^{*(l)}+\beta^s$.
			\item  $\psi^s=\mathcal{P}^s/\mathcal{Q}^s$.
		\end{itemize}

		\item \textbf{For the optimal simplex table}:
		For the basic variables $x_k$ of $x$ we have the equation $x_k = \hat{b}_{p(k)}- \sum_{j=1}^{n} \hat{a}_{p(k)j} x_j $ for all $ k \in \mathcal{B}_l$, $p(k)$ where: ${p(k)}$ indicates the position of $ x_k$ in $x$ , $\hat{a}_{p(k)j}$ and $\hat{b}_{p(k)}$ indicate the updated values, in the simplex
		table, of elements of the constraint matrix $A$ and vector $b$, respectively.\\
		\item \textbf{Sets and Used cuts}:
		We have: $ \mathcal{N}_l \cap \overline{\{1,n\}}$ as the index set of non-basic original variables, $ \mathcal{N}_l \backslash \overline{\{1,n\}} $ as the index set of non-basic surplus variables, and $ \mathcal{B}_l \cap \overline{\{1,n\}} $ as the index set of basic original variables. The set $ \mathcal{H}_l$ indicates non-increasing directions of the criteria $f_i, i=1,\ldots,r$.
		:
		$$  \mathcal{H}_l=\left\{ j\in \mathcal{N}_l| \exists i\in \{ 1,\ldots,r\}; \left( \rho_j-\sum_{k\in \mathcal{B}_l\cap\overline{\{1,n\}}}\nabla f_i(x^{*(l)})_k\hat{a}_{p(k)j} \right)<0 \right\} $$ \\$$ \bigcup \left\{ j\in \mathcal{N}_l| ;\left( \rho_j-\sum_{k\in \mathcal{B}_l\cap\overline{\{1,n\}}}\nabla f_i(x^{*(l)})_k\hat{a}_{p(k)j} \right)=0 ,\;\forall i\in \{ 1,\ldots,r\}\right\} $$
		where $ \nabla f_i(x^{*(l)})_k$ is the $ k^{th}$ coordinate of the gradient  vector of criterion $ i$ in  $ x^{*(l)}$ and $ \rho_j$ is defined by :
		\[\rho_j=\begin{cases} \nabla f_i(x^{*(l)})_k \quad \text{if} \; j\in \mathcal{N}_l\cap\overline{\{1,n\}} \\
			0  \quad \qquad \qquad \; \text{if}\; j \in \mathcal{N}_l\backslash \overline{\{1,n\}}\end{cases}\]
		However, in the optimal simplex tables, we denote:$$ \bar{f}_i=\rho_j-\sum_{k\in \mathcal{B}_l\cap\overline{\{1,n\}}}\nabla f_i(x^{*(l)})_k\hat{a}_{p(k)j} \; \text{for} \;i=1,\ldots,r$$.\\
		Our method is based on  \emph{Branch and Cut} principle, we use the information to construct the efficient cut in the aim to remove all non efficient integer solutions for $(MOIQP)$. For this, we define the efficient cut as follow:
		\begin{equation}
			\label{eq1}	
			\sum_{j \in \mathcal{H}_l} x_j \ge 1
		\end{equation}
		Also, we define the set $ \mathcal{H'} $
		\[ \mathcal{H'}=\left\{j \in \mathcal{N}|\bar{\gamma}^2_j<0 \right\}\bigcup\left\{ j \in \mathcal{N}|\bar{\gamma}^1_j=0 \,\text{and}\, \bar{\gamma}^2_j=0 \right\} \]
		then , we can define the following cut:
		\begin{equation}
			\label{eq2}	
			\sum_{j \in \mathcal{H'}_l} x_j \ge 1
		\end{equation}
		At the node $l$, we can define the following subsets:
		\[\mathcal{X}_{l+1}^1=\{x\in \mathcal{X}_l\mid \sum_{j \in \mathcal{H}_l} x_j \ge 1\} \]
		\[ \mathcal{X}_{l+1}^2=\{ x \in \mathcal{X}_l \mid \sum_{j \in \mathcal{H'}_l} x_j \ge 1\ \} \]
		and
		\[ \mathcal{X}_{l+1}=\mathcal{X}^1_{l+1}\cup \mathcal{X}^2_{l+1}\]
	\end{itemize}
	\section{Theoretical background \& Main results}
	In this section, we present important definitions and theoretical results that are essential for understanding the subsequent sections\\
	Let be given the following theorem:
	\begin{theorem}
		The feasible solution $ x^{*(l)}$ is an optimal solution for the problem $ (LFP)_l$ if and only if the vector $ \gamma$ is such that $ \bar{\gamma_j} \ge 0$ for all $ j \in \mathcal{N}_l.$ see \cite{martos1975nonlinear}
	\end{theorem}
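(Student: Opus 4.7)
The plan is to reduce the optimality question to a one-dimensional analysis along each edge of $\mathcal{X}_l$ emanating from the vertex $x^{*(l)}$, and then to lift the local edge-wise information to global optimality via the pseudoconvexity of linear fractional functions on regions where the denominator has constant sign. Since $x^{*(l)}$ is a basic feasible solution associated with $\mathcal{B}_l$, the edges of $\mathcal{X}_l$ leaving $x^{*(l)}$ are in one-to-one correspondence with the non-basic variables $x_j$, $j\in \mathcal{N}_l$: along the $j$-th edge one lets $x_j=\theta \ge 0$ increase while the basic components adjust via $x_{\mathcal{B}_l}=x^{*(l)}_{\mathcal{B}_l}-\theta\,\mathcal{B}_l^{-1}a_j$.

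Next I would exploit the affine behaviour of $\mathcal{P}^1$ and $\mathcal{Q}^1$ along each such edge: one gets $\mathcal{P}^1(x^{*(l)}+\theta d_j)=\mathcal{P}^1(x^{*(l)})+\theta\eta_j^{1}$ and $\mathcal{Q}^1(x^{*(l)}+\theta d_j)=\mathcal{Q}^1(x^{*(l)})+\theta\vartheta_j^{1}$. Differentiating the ratio at $\theta=0^+$ and using the positivity assumption on $\mathcal{Q}^1$, the right directional derivative of $\psi^1$ at $x^{*(l)}$ along edge $j$ equals
\[
\frac{\eta_j^{1}\mathcal{Q}^1(x^{*(l)})-\mathcal{P}^1(x^{*(l)})\vartheta_j^{1}}{\mathcal{Q}^1(x^{*(l)})^2}=\frac{\bar{\gamma}_j^{\,1}}{\mathcal{Q}^1(x^{*(l)})^2}.
\]
Because the denominator is strictly positive, the sign of this derivative coincides with the sign of $\bar{\gamma}_j^{\,1}$. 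Consequently, the condition $\bar{\gamma}_j^{\,1}\ge 0$ for all $j\in \mathcal{N}_l$ is equivalent to saying that $\psi^1$ does not strictly decrease along any edge of $\mathcal{X}_l$ starting at $x^{*(l)}$.

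The decisive step is to pass from this local edge-wise condition to global optimality. For this I would invoke the classical fact (see Martos, \cite{martos1975nonlinear}) that a linear fractional function whose denominator keeps a constant (positive) sign on a convex set is pseudoconvex there. Since $\mathcal{X}_l$ is a convex polyhedron contained in $\mathcal{X}$ and $\mathcal{Q}^1>0$ throughout $\mathcal{X}$, $\psi^1$ is pseudoconvex on $\mathcal{X}_l$. Any feasible direction from $x^{*(l)}$ can be written as a non-negative combination of edge directions, so the non-negativity of the directional derivative along every edge implies $\nabla\psi^1(x^{*(l)})^\top(x-x^{*(l)})\ge 0$ for all $x\in\mathcal{X}_l$; by pseudoconvexity this is sufficient for $x^{*(l)}$ to be a global minimizer, and by standard necessary conditions at a vertex it is also necessary. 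This closes both implications.

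The main obstacle I anticipate is the pseudoconvexity argument rather than the directional calculus: one must be careful that the usual first-order KKT-type characterization is both necessary and sufficient despite $\psi^1$ being non-convex, and this is precisely where the standing hypothesis $\mathcal{Q}^s>0$ on $\mathcal{X}$ cannot be removed. Once pseudoconvexity is in hand, the equivalence reduces to the computation of $\bar{\gamma}_j^{\,1}$ in terms of $\eta_j^{1},\vartheta_j^{1}$, which is routine from the simplex update formulas recalled in the Notations.
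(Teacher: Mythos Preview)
The paper does not supply its own proof of this theorem: it merely states the result and refers the reader to Martos~\cite{martos1975nonlinear}. Your proposal therefore goes well beyond what the paper offers, giving the classical argument that underlies the cited result. The edge-wise computation of the directional derivative of $\psi^1$ and the identification of its sign with that of $\bar{\gamma}^{1}_j$ are correct, and the appeal to pseudoconvexity of a linear fractional function with positive denominator is exactly the standard mechanism (and precisely the content of the Martos reference) for lifting local vertex optimality to global optimality on the polytope. One small caveat worth making explicit in the necessity direction is nondegeneracy: if the vertex $x^{*(l)}$ is degenerate, some edge direction $d_j$ may have zero feasible step length, so $\bar{\gamma}^{1}_j<0$ alone does not immediately produce a feasible point with smaller objective; this is the usual technical wrinkle and is typically handled either by a nondegeneracy assumption or by the observation that the simplex criterion remains valid as a stationarity condition.
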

	
	\begin{theorem}
		Supposing that $ \mathcal{H}_l\ne \emptyset$ and $ \mathcal{H'}_l\ne \emptyset$ at the current integer solution $ x^{*(l)}$. If $ x\ne x^{*(l)}$ is an integer efficient solution in domain   $\mathcal{X}_l \backslash \{ x^{*(l)}\}$, then $ x \in \mathcal{X}_{l+1}$.
		
	\end{theorem}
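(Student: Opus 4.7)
The plan is to argue by contradiction. Suppose $x \in \mathcal{X}_l \setminus \{x^{*(l)}\}$ is an integer efficient solution for $(BLF/MOIQP)$ but $x \notin \mathcal{X}_{l+1}$. Then $x$ violates both cuts, so $\sum_{j \in \mathcal{H}_l} x_j < 1$ and $\sum_{j \in \mathcal{H'}_l} x_j < 1$; by integrality and non-negativity, this forces $x_j = 0$ for every $j \in \mathcal{H}_l \cup \mathcal{H'}_l$. I will show that, under this vanishing, $x^{*(l)}$ Pareto-dominates $x$ either with respect to $(MOIQP)$, contradicting $x \in \mathcal{X}_{\mathcal{Q}}$, or with respect to $(BOILFP)$, contradicting $x \in \mathcal{X}_{\mathcal{F}}$. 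Either contradiction suffices.

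For the $(MOIQP)$ branch, I Taylor-expand each convex quadratic around $x^{*(l)}$; positive semi-definiteness of $Q_i$ lets me discard the quadratic remainder and write
\[
f_i(x) \ge f_i(x^{*(l)}) + \nabla f_i(x^{*(l)})(x-x^{*(l)}).
\]
Using the simplex identity $x_k = \hat{b}_{p(k)} - \sum_{j \in \mathcal{N}_l} \hat{a}_{p(k)j} x_j$ for $k \in \mathcal{B}_l$, together with the case definition of $\rho_j$ that separates original from surplus non-basic indices, the gradient term rewrites as $\sum_{j \in \mathcal{N}_l} \bar{f}_i\, x_j$. Since $x_j = 0$ on $\mathcal{H}_l$, only $j \in \mathcal{N}_l \setminus \mathcal{H}_l$ contributes; by the defining inequalities of $\mathcal{H}_l$ each such coefficient is $\ge 0$ for every $i$, so $f_i(x) \ge f_i(x^{*(l)})$ for all $i$. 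Because $x \ne x^{*(l)}$, some $x_{j^*} > 0$ with $j^* \in \mathcal{N}_l \setminus \mathcal{H}_l$, and the definition of $\mathcal{H}_l$ then furnishes an index $i^*$ whose reduced coefficient at $j^*$ is strictly positive, yielding $f_{i^*}(x) > f_{i^*}(x^{*(l)})$. Hence $x^{*(l)} \in \mathcal{D}$ dominates $x$ in the $(MOIQP)$ sense, contradicting $x \in \mathcal{X}_{\mathcal{Q}}$.

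A parallel argument runs through cut (2) for the $(BOILFP)$ side. Starting from the standard manipulation
\[
\psi^s(x) - \psi^s(x^{*(l)}) = \frac{1}{\mathcal{Q}^s(x)\mathcal{Q}^s(x^{*(l)})} \sum_{j \in \mathcal{N}_l} \bar{\gamma}^s_j\, x_j,
\]
whose denominator is positive by assumption, Theorem 1 gives $\bar{\gamma}^1_j \ge 0$ for all $j \in \mathcal{N}_l$ (optimality of $x^{*(l)}$ for $(LFP)_l$), while the definition of $\mathcal{H'}_l$ combined with $x_j = 0$ on $\mathcal{H'}_l$ forces $\bar{\gamma}^2_j \ge 0$ on every contributing index. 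Strict inequality in one of $\psi^1, \psi^2$ follows from the existence of some $j^* \in \mathcal{N}_l \setminus (\mathcal{H}_l \cup \mathcal{H'}_l)$ with $x_{j^*} > 0$, since $\mathcal{H'}_l$ absorbs exactly the case $\bar{\gamma}^1_{j^*} = \bar{\gamma}^2_{j^*} = 0$. This contradicts $x \in \mathcal{X}_{\mathcal{F}}$.

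The main technical obstacle will be the simplex-table algebra: rewriting both $\nabla f_i(x^{*(l)})(x-x^{*(l)})$ and the fractional numerator $\mathcal{P}^s(x)\mathcal{Q}^s(x^{*(l)}) - \mathcal{P}^s(x^{*(l)})\mathcal{Q}^s(x)$ in terms of the reduced quantities $\bar{f}_i$ and $\bar{\gamma}^s_j$ requires careful bookkeeping of which summations range over original versus surplus variables and over basic versus non-basic indices. The case definition of $\rho_j$ in Section 2 was crafted precisely for this reconciliation; once the identification is in hand, the sign analysis driven by the defining inequalities of $\mathcal{H}_l$ and $\mathcal{H'}_l$ is essentially mechanical.
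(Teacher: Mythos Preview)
Your argument is correct and follows essentially the same route as the paper's proof: assume $x$ violates both cuts, use the simplex identities to rewrite $\nabla f_i(x^{*(l)})(x-x^{*(l)})$ and $\psi^s(x)-\psi^s(x^{*(l)})$ as sums over $\mathcal N_l$ of the reduced coefficients $\bar f_i$ and $\bar\gamma^s_j$, and read off domination of $x$ by $x^{*(l)}$ from the defining inequalities of $\mathcal H_l$ and $\mathcal H'_l$. Your explicit appeal to convexity of $f_i$ to pass from the gradient inequality to $f_i(x)\ge f_i(x^{*(l)})$, and your use of the global identity $\psi^s(x)-\psi^s(x^{*(l)})=\bigl(\sum_{j\in\mathcal N_l}\bar\gamma^s_j x_j\bigr)/\bigl(\mathcal Q^s(x)\mathcal Q^s(x^{*(l)})\bigr)$ in place of the paper's single-pivot computation, are modest gains in rigor but not a different method.
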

	\begin{proof}
		Consider any $x \in \mathcal{X}_l$ such that, $x\ne x^{*(l)}$. Suppose that $ x\notin\mathcal{X}_{l+1}$. Then $x \notin \mathcal{X}^{1}_{l+1} $ and $x \notin \mathcal{X}^{2}_{l+1} $.
		\begin{itemize}
			
			\item If $x \notin \mathcal{X}^{1}_{l+1}$, then $ \sum_{j \in \mathcal{H}_l}x_j < 1 $. This implies that $x_j=0$ for all $ j \in \mathcal{H}_l $, as $x$ is an integer. However, when considering only the first $n$ variables of $(MOIQP)$ in the solution $x^{*(l)}$ and $x$, we can calculate the following:
			\[\nabla f_i(x^{*(l)})(x-x^{*(l)})=-\nabla f_i(x^{*(l)})x^{*(l)}+\nabla f_i(x^{*(l)})x \quad \forall i \in \{1,\ldots,r\}.\]
			
			Furthermore, from the current optimal simplex table at $x^{*(l)}$, we have the equation:
			$x_k=\hat{b}_{p(k)}-\sum_{j \in \mathcal{N}_l} \hat{a}_{p(k)j}x_j$ for all index $ k\in \mathcal{B}_l$. Now, we can write the following:
			\begin{equation*}
				\begin{split}
					\nabla f_i(x^{*(l)})x &= \sum_{k\in \mathcal{B}_l\cap\overline{\{1,n\}}}\nabla f_i(x^{*(l)})_k(\hat{b}_{p(k)}-\sum_{j \in \mathcal{N}_l} \hat{a}_{p(k)j}x_j)\\
					& + \sum_{j\in \mathcal{N}_l\cap\overline{\{1,n\}}}\nabla f_i(x^{*(l)})_jx_j \qquad \quad \forall i \in \{1,\ldots,r\}
				\end{split}
			\end{equation*}
			
			\begin{equation*}
				\begin{split}
					\nabla f_i(x^{*(l)})x&= \sum_{k\in \mathcal{B}_l\cap\overline{\{1,n\}}}\nabla f_i(x^{*(l)})_k\hat{b}_{p(k)} \\
					&- \sum_{j \in \mathcal{N}_l}\left[  \sum_{k\in \mathcal{B}_l\cap\overline{\{1,n\}}} (\nabla f_i(x^{*(l)}))_k\hat{a}_{p(k)j} \right]x_j\\
					& + \sum_{j\in \mathcal{N}_l\cap\overline{\{1,n\}}} \nabla f_i(x^{*(l)})_jx_j \qquad\quad \forall i \in \{1,\ldots,r\}
				\end{split}
			\end{equation*}
			
			Besides, we have:
			\[\nabla f_i(x^{*(l)})x^{*(l)}=\sum_{k\in \mathcal{B}_l\cap\overline{\{1,n\}}} \nabla f_i(x^{*(l)})_k\hat{b}_{p(k)} \]
			Thus, we obtain:
			\begin{equation*}
				\begin{split}
					\nabla f_i(x^{*(l)})(x-x^{*(l)})&=\sum_{j\in \mathcal{N}_l\cap\overline{\{1,n\}}} \left[ \nabla f_i(x^{*(l)})_j-\sum_{k\in \mathcal{B}_l\cap\overline{\{1,n\}}}\nabla f_i(x^{*(l)})_k\hat{a}_{p(k)j} \right]x_j \\
					&+ \sum_{j\in \mathcal{N}_l\backslash\overline{\{1,n\}}} \left[ -\sum_{k\in \mathcal{B}_l\cap\overline{\{1,n\}}}\nabla f_i(x^{*(l)})_k\hat{a}_{p(k)j}
					\right]x_j\quad \forall i \in \{1,\ldots,r\}
				\end{split}
			\end{equation*}
			
			Therefore:
			\[ \nabla f_i(x^{*(l)})(x-x^{*(l)})=\sum_{j \in \mathcal{N}_l}\left[ \rho_j-\sum_{k\in \mathcal{B}_l\cap\overline{\{1,n\}}}f_i(x^{*(l)})_k\hat{a}_{p(k)j} \right]x_j \quad \forall i \in \{1,\ldots,r\}.  \]
			
			Where:
			\[ \rho_j=\begin{cases} \nabla f_i(x^{*(l)})_k \quad \text{if} \; j\in \mathcal{N}_l\cap\overline{\{1,n\}} \\
				0  \quad \qquad \qquad \; \text{if}\; j \in \mathcal{N}_l\backslash \overline{\{1,n\}}\end{cases}\]
			
			Using the set $\mathcal{H}_l$ as a subset of the $\mathcal{N}_l$ set, we can write:
			\begin{multline*}
				\nabla f_i(x^{*(l)})(x-x^{*(l)})=\sum_{j \in \mathcal{H}_l} \left[ \rho_j-\sum_{k\in \mathcal{B}_l\cap\overline{\{1,n\}}}\nabla f_i(x^{*(l)})_k\hat{a}_{p(k)j}\right]x_j \\
				+ \sum_{j \in \mathcal{N}_l\backslash \mathcal{H}_l}\left[ \rho_j-\sum_{k\in \mathcal{B}_l\cap\overline{\{1,n\}}}\nabla f_i(x^{*(l)})_k\hat{a}_{p(k)j}\right]x_j \quad \forall i \in \{1,\ldots,r\}
			\end{multline*}
			
			As it has been assumed that $x_j=0, \, \forall j \in \mathcal{H}_l $, then the last expression is reduced to:
			\[ \nabla f_i(x^{*(l)})(x-x^{*(l)})=\sum_{j \in \mathcal{N}_l\backslash \mathcal{H}_l}\left[ \rho_j-\sum_{k\in \mathcal{B}_l\cap\overline{\{1,n\}}}\nabla f_i(x^{*(l)})_k\hat{a}_{p(k)j}\right]x_j \quad \forall i \in \{1,\ldots,r\} \]
			
			According to the definition of set $\mathcal{H}_l$, for all index $j \in \mathcal{N}_l\backslash \mathcal{H}_l$, we have the following inequality:
			\[\rho_j-\sum_{k\in \mathcal{B}_l\cap\overline{\{1,n\}}}\nabla f_i(x^{*(l)})_k\hat{a}_{p(k)j}\ge 0\quad \forall i \in \{1,\ldots,r\} \]
			with at least one strict inequality. Hence:
			\[\nabla f_i(x^{*(l)})(x-x^{*(l)})\ge 0\quad \forall i \in \{1,\ldots,r\}\]
			
			Then, we obtain the following inequality:
			\[f_i(x^{*(l)})+\nabla f_i(x^{*(l)})(x-x^{*(l)})\ge f_i(x^{*(l)})  \quad\forall i \in \{1,\ldots,r\}\]
			
			From the previous inequality, we conclude that:
			\[ f_i(x)\ge f_i(x^{*(l)}) \quad\forall i \in \{1,\ldots,r\} \]
			
			Since there is at least one strict inequality, we deduce that $f(x)$ is dominated by $f(x^{*(l)})$, implying that $x$ is not an efficient solution.
			
			\item If $x \notin \mathcal{X}^2_{l+1}$, then $x \in \{x \in \mathcal{X}_l | \sum_{j \in \mathcal{N}_l \backslash \mathcal{H'}_l}x_j \ge 1\}$; similarly, it implies that
			\[x \in \{x \in \mathcal{X}_l | \sum_{j \in \mathcal{H'}_l} < 1\}\]
			The following inequalities hold:
			\[\sum_{j \in \mathcal{H'}_l}x_j < 1\]
			\[\sum_{j \in \mathcal{N}_l \backslash \mathcal{H'}_l}x_j \ge 1\]
			
			It follows that $x_j = 0$ for all $j \in \mathcal{H'}_l$ and $x_j \ge 1$ for at least one index $j \in \mathcal{N}_l \backslash \mathcal{H}_l$.
			
			The updated values for each objective function from the optimal tableau simplex corresponding to the solution $x^{*(l)}$ are written according to the non-basic indexes $j \in \mathcal{N}_l$ as follows:
			\[\mathcal{P}^{s}(x) = \mathcal{P}^{s}(x^{*(l)}) + \theta_j\eta^{s(l)}_j\]
			\[\mathcal{Q}^{s}(x) = \mathcal{Q}^{s}(x^{*(l)}) + \theta_j\vartheta^{s(l)}_j\]
			where $\theta_j = \dfrac{x^{*(l)}_{\mathcal{B}_{l_{(r*)}}}}{A^r_j} = \min \{\dfrac{x^{*(l)}_{\mathcal{B}_{l_{(r*)}}}}{A^r_j} | A^r_j > 0\}$.
			
			Therefore, we have:
			\[\psi^2(x) = \dfrac{\mathcal{P}^2(x)}{\mathcal{Q}^2(x)} = \dfrac{\mathcal{P}^2(x^{*(l)}) + \delta_j\eta_j}{\mathcal{Q}^2(x^{*(l)}) + \delta_j\vartheta_j}\]
			
			Then, we have
			\[\psi^2(x) - \psi^2(x^{*(l)}) = \dfrac{\mathcal{P}^2(x^{*(l)}) + \delta_j\eta_j}{\mathcal{Q}^2(x^{*(l)}) + \delta_j\vartheta_j} - \dfrac{\mathcal{P}^2(x^{*(l)})}{\mathcal{Q}^2(x^{*(l)})}\]
			\[= \delta_j\dfrac{[\mathcal{Q}^2(x^{*(l)})(\eta^{2(l)}_j) - \mathcal{P}^2(x^{*(l)})(\vartheta^{2(l)}_j)]}{\mathcal{Q}^2(x^{*(l)})[\mathcal{Q}^2(x^{*(l)}) + \delta_j(\vartheta^{2(l)}_j)]}\]
			\[= \delta_j\dfrac{[\mathcal{Q}^2(x^{*(l)})(\eta^{2(l)}_j) - \mathcal{P}^2(x^{*(l)})(\vartheta^{2(l)}_j)]}{\mathcal{Q}^2(x^{*(l)})\mathcal{Q}^2(x)}\]
			
			Since we have the following notation:
			\[\bar{\gamma}^{2(l)}_j = \mathcal{Q}^{2}(x^{*(l)})\eta_{j}^{2(l)} - \mathcal{P}^{2}(x^{*(l)})\vartheta_{j}^{2(l)}.\]
			As the component ${\gamma}^{2(l)}_j \ge 0$ for all $j \in \mathcal{N}\backslash\mathcal{H'}$,
			\[\psi^2(x) - \psi^2(x^{*(l)}) \ge 0\]
			
			Thus, $\psi^2(x^{*(l)}) \le \psi^2(x)$, with $\psi^1(x^{*(l)}) \le \psi^1(x)$; then $\psi(x)$ is dominated by $\psi(x^{*(l)})$, however, $x \notin \mathcal{X}_{\mathcal{Q}}$; hence $x \notin \mathcal{X}_{Eff}$.
			
		\end{itemize}
	\end{proof}
	\begin{proposition}
		\label{prop2}
		If $ \mathcal{H}_l=\emptyset$ or $\mathcal{H'}_l=\emptyset $ at the current integer solution $ x^{*(l)}$, then $ \mathcal{X}_l \backslash \{ x^{*(l)}\}$ is an explored domain
	\end{proposition}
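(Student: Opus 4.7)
The plan is to treat the two hypotheses separately and, in each case, show that every point of $\mathcal{X}_l$ distinct from $x^{*(l)}$ is dominated in the corresponding problem, so nothing new can be added to $\mathcal{X}_{Eff}$ from this subdomain.

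For the alternative $\mathcal{H}_l = \emptyset$, I would mimic exactly the expansion performed in the first bullet of the proof of Theorem 3. Since the complement $\mathcal{N}_l \setminus \mathcal{H}_l$ is now the entire index set $\mathcal{N}_l$, the identity
\[
\nabla f_i(x^{*(l)})(x - x^{*(l)}) = \sum_{j \in \mathcal{N}_l} \left[ \rho_j - \sum_{k \in \mathcal{B}_l \cap \overline{\{1,n\}}} \nabla f_i(x^{*(l)})_k \hat{a}_{p(k)j} \right] x_j
\]
has only non-negative bracketed coefficients, and emptiness of $\mathcal{H}_l$ guarantees that for every $j$ at least one criterion index $i(j)$ produces a strictly positive bracket. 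Choosing any non-basic $j^*$ with $x_{j^*} > 0$ (which must exist, since $x \neq x^{*(l)}$), the term indexed by $i = i(j^*)$ yields a strictly positive gradient, and convexity of $f_{i(j^*)}$ upgrades this to $f_{i(j^*)}(x) > f_{i(j^*)}(x^{*(l)})$, while the other criteria still satisfy $f_i(x) \geq f_i(x^{*(l)})$. Hence $x$ is dominated in $(MOIQP)$, so $x \notin \mathcal{X}_{\mathcal{Q}}$ and \emph{a fortiori} $x \notin \mathcal{X}_{Eff}$.

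For the alternative $\mathcal{H}'_l = \emptyset$, the definition of $\mathcal{H}'_l$ immediately yields $\bar{\gamma}^2_j \geq 0$ for every $j \in \mathcal{N}_l$ (first summand empty) and at least one of $\bar{\gamma}^1_j, \bar{\gamma}^2_j$ strictly positive at every such $j$ (second summand empty). Since $x^{*(l)}$ solves $(LFP)_l$, Theorem 1 already gives $\bar{\gamma}^1_j \geq 0$ for every $j$. I would then establish the global reduced-gradient identity for linear fractional functions,
\[
\psi^s(x) - \psi^s(x^{*(l)}) = \frac{\sum_{j \in \mathcal{N}_l} \bar{\gamma}^{s}_j \, x_j}{\mathcal{Q}^s(x^{*(l)}) \, \mathcal{Q}^s(x)}, \qquad s = 1, 2,
\]
which is the full analogue of the one-step formula used in Case 2 of the proof of Theorem 3 and follows by substituting the basic variables in terms of the non-basic ones inside $\mathcal{P}^s$ and $\mathcal{Q}^s$. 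With $\bar{\gamma}^s_j \geq 0$ on all of $\mathcal{N}_l$ for both $s$, and the denominators positive by assumption, the right-hand side is non-negative, so $\psi^s(x) \geq \psi^s(x^{*(l)})$ for every $x \in \mathcal{X}_l$. Picking a non-basic $j^*$ with $x_{j^*} > 0$ and taking $s$ for which $\bar{\gamma}^s_{j^*} > 0$ promotes this to a strict inequality in that component. Hence $x^{*(l)}$ dominates $x$ in $(BOILFP)$ and so $x \notin \mathcal{X}_{Eff}$.

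The main obstacle I anticipate is the \emph{at least one strict} step in both cases: it is precisely here that the dichotomy in the definitions of $\mathcal{H}_l$ (the $\exists i$ branch versus the $\forall i,\,=0$ branch) and of $\mathcal{H}'_l$ (the $<0$ branch versus the both-zero branch) must be used in full force, selecting the criterion or preference function tied to the particular pivot direction $j^*$ along which $x$ deviates from $x^{*(l)}$. A secondary technicality in Case 2 is that Theorem 3 writes $\psi^2(x)-\psi^2(x^{*(l)})$ only along a single pivot, so I would need to promote it to the global reduced-gradient identity above; this, however, is only a routine algebraic substitution using the simplex parameterization of $\mathcal{X}_l$, and no new idea is required.
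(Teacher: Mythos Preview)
Your proposal is correct and follows the same two-case strategy as the paper's own proof: in each branch you read off the sign pattern of the reduced coefficients $\bar f^i_j$ (respectively $\bar\gamma^s_j$) forced by $\mathcal{H}_l=\emptyset$ (respectively $\mathcal{H}'_l=\emptyset$) and conclude that $x^{*(l)}$ dominates every other feasible point. In fact your write-up is more careful than the paper's, which asserts a single index $i_0$ with $\bar f^{i_0}_j>0$ for all $j$ and a strict $\bar\gamma^1_j>0$ from optimality---your device of picking $j^*$ with $x_{j^*}>0$ and then selecting the criterion $i(j^*)$ (or the preference $s$) at which the bracket is strictly positive is exactly the right way to secure the ``at least one strict'' inequality, and your global reduced-gradient identity for $\psi^s$ fills in what the paper leaves implicit.
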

	\begin{proof}
		\begin{itemize}
			\item Assume $\mathcal{H}=\emptyset$, then $ \forall j \in  \mathcal{N}$, we have $\bar{f}^i_j \ge 0 $ and $ \exists i_0 \in {1,\ldots, r}$ such that $\bar{f}^{i_0}_j > 0,\, \forall j \in \mathcal{N}_l$. So $x^{*(l)} $ dominates  all points $x, \,\text{and }\, x\neq x^{*(l)} $ of domain $\mathcal{D}_l $
			\item Now assume that $\mathcal{H'}_l $ then $ \forall \in \mathcal{N}_l$ , $\bar{\gamma}^2_j>0 $ or  $ \bar{\gamma}^2_j=0 $ and $\bar{\gamma}^1_j=0 $ in addition to that $ \bar{\gamma}^1_j>0,\, \forall j\in \mathcal{N}_l $ since it is an optimal solution for $ (LFP)_l$, $x^{*(l)} $ becomes the most preferred solution in the domain $\mathcal{D}_l $
		\end{itemize}
	\end{proof}
	\begin{theorem}
		The algorithm terminates in  a finite number of iterations and the set $\mathcal{X}_{Eff} $ contains all the solutions of  $(BLF/MOIQP) $ if such solutions exist.
	\end{theorem}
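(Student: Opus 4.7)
The plan is to split the statement into its two parts, termination and completeness, and treat each by induction on the node index $l$.

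For finite termination, I would first invoke the assumption that $\mathcal{X}$ is bounded, so $\mathcal{D}=\mathcal{X}\cap\mathbb{Z}^n$ is a finite set. At each node $l$ the algorithm does exactly one of three things: (i) if $x^{*(l)}$ is fractional, it branches on a non-integer component; since every decision variable lies in a bounded integer interval, a standard branch-and-bound argument shows that this branching tree is finite. (ii) If $x^{*(l)}$ is integer and either $\mathcal{H}_l=\emptyset$ or $\mathcal{H}'_l=\emptyset$, Proposition~\ref{prop2} lets us close the node. (iii) Otherwise we replace $\mathcal{X}_l$ by $\mathcal{X}_{l+1}=\mathcal{X}^1_{l+1}\cup\mathcal{X}^2_{l+1}$. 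The key observation here is that at $x^{*(l)}$ every non-basic variable is zero, so both cuts~(\ref{eq1}) and~(\ref{eq2}) are violated; hence $x^{*(l)}\notin\mathcal{X}_{l+1}$. Thus every cut step permanently removes at least one integer point from further consideration, and since $\mathcal{D}$ is finite the algorithm halts after finitely many iterations.

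For completeness, I would fix an arbitrary $\bar{x}\in\mathcal{X}_{Eff}$ and show by induction along the sequence of domains containing $\bar{x}$ that $\bar{x}\in\mathcal{X}_l$ for every $l$ until $\bar{x}$ itself becomes the current integer optimum. In the branching case, preservation of $\bar{x}$ is immediate, since branching partitions $\mathcal{X}_l$ and $\bar{x}$ lies in exactly one child. In the cut case with $\bar{x}\ne x^{*(l)}$ and $\mathcal{H}_l,\mathcal{H}'_l\ne\emptyset$, Theorem~3.2 applied to $\bar{x}$ directly gives $\bar{x}\in\mathcal{X}_{l+1}$, so no efficient point is cut off. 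In the case $\mathcal{H}_l=\emptyset$ or $\mathcal{H}'_l=\emptyset$, Proposition~\ref{prop2} forces $x^{*(l)}$ to dominate every other point of $\mathcal{D}_l$, so by efficiency of $\bar{x}$ we must have $\bar{x}=x^{*(l)}$ and $\bar{x}$ has already been reached. Consequently $\bar{x}$ is eventually the current integer solution at some node, at which point it is subjected to the efficiency tests and, if it passes both, inserted into $\mathcal{X}_{Eff}$; since $\bar{x}$ is efficient for both $(MOIQP)$ and $(BOILFP)$ by hypothesis, it necessarily passes both tests.

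I expect the main obstacle to be not the logical content (which is essentially a bookkeeping consequence of Theorem~3.2 and Proposition~\ref{prop2}) but rather the careful articulation of how the branch-and-bound tree and the cutting-plane mechanism interact: one must verify that every integer point of $\mathcal{D}$ is either visited as some $x^{*(l)}$, pruned by a dominance argument, or excluded by a valid cut, without double-counting or losing an efficient candidate when nodes are backtracked. Once this bookkeeping is made explicit and the two standard facts — that $x^{*(l)}$ is cut off by (\ref{eq1}) and (\ref{eq2}) while every $\bar{x}\ne x^{*(l)}$ in $\mathcal{X}_{Eff}$ survives by Theorem~3.2 — are combined, termination and completeness follow.
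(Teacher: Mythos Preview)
Your proposal is correct and follows essentially the same approach as the paper: both arguments derive finite termination from the finiteness of $\mathcal{D}$ together with the fact that each cut strictly removes the current integer optimum, and both derive completeness from Proposition~\ref{prop2} (for the fathoming case) and the preservation-of-efficient-points result of Theorem~3 (for the cut case). Your version is in fact more explicit than the paper's own proof, particularly in articulating the induction along the branch that contains a fixed $\bar{x}\in\mathcal{X}_{Eff}$ and in spelling out why $\bar{x}$ must eventually coincide with some $x^{*(l)}$; the paper's proof leaves this bookkeeping largely implicit.
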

	\begin{proof}
		As $\mathcal{D}$, the set of integer feasible solutions of $(MOIQP)$, is a finite and bounded set contained within $\mathcal{X}$, the cardinalities of the efficient sets $\mathcal{X}_{Eff}$, $\mathcal{X}_{\mathcal{Q}}$, and $\mathcal{X}_{\mathcal{F}}$ are also finite. These sets comprise a limited number of integer solutions, implying that the search tree will have a finite number of branches. Therefore, the algorithm terminates after a finite number of steps.
		
		For $\mathcal{X}_{Eff}$ to encompass all solutions of $(BLF/MOIQP)$, fathoming rules are employed to ensure no elements in $\mathcal{X}_{Eff}$ are lost. At each step $l$ of the algorithm \ref{Algo}, if an integer solution $x^{*(l)}$ is found, the cuts eliminate $x^{*(l)}$ and all dominated solutions from the search (see proposition [\ref{prop2}]).
		
		The first fathoming rule applies when the set $\mathcal{H}_l$ or $\mathcal{H'}_l$ is empty. In this case, the current node can be pruned, as the remaining domain contains only solutions dominated either in terms of the $(MOIQP)$ program or the $(BOILFP)$. The second rule is the trivial case where the reduced domain becomes infeasible, whether due to previous cuts or branching.
		
		Moreover, let \(x_l\) represent an integer optimal solution explored during iteration \(l\). Various scenarios can arise for \(x_l\) during the algorithm's execution:
		\begin{itemize}
			\item If \(x_l\) does not satisfy the efficiency criteria for the $(MOIQP)$ problem, it is initially excluded.
			\item Conversely, if \(x_l\) remains efficient with respect to the $(MOIQP)$ problem, it becomes a candidate for inclusion in the set $\mathcal{X}_{Eff}$ for the $(BLF/MOIQP)$ problem. Assuming all candidate solutions for the $(BLF/MOIQP)$ problem, there will be at least one solution that is efficient for the $(BOILFP)$ problem. This scenario leads to the exclusion of all solutions that are dominated by such a solution and are inefficient in terms of $(BOILFP)$. Thus, efficiency tests are conducted for both $(MOIQP)$ and $(BOILFP)$ to establish their respective criteria.
		\end{itemize}
	\end{proof}
	
	\subsection{Efficiency  Tests}
	Let $x^{*(l)}$ be an optimal solution of the program $(LFP)_l$. There are two efficiency tests to take into account. First, we test the efficiency of the obtained optimal solution for the program $(MOIQP)$ by solving the following single-objective mathematical program:
	\[ (\mathcal{T}^1_{x^{*(l)}})\begin{cases}
		\max \,\, \varphi = \sum\limits_{i=1}^{r}\varepsilon_i \\
		f_i(x) + \varepsilon_i \leq f_i(x^{*(l)}) \qquad i=1,\ldots, r \\
		\varepsilon_i \geq 0 \\
		x \in \mathcal{D}
	\end{cases}
	\]
	The point $x^{*(l)}$ is $\emph{efficient}$ (i.e., $x^{*(l)} \in \mathcal{X}_{\mathcal{Q}}$) for the original problem if the objective function $\varphi$ is null in $(\mathcal{T}^1_{x^{*(l)}})$. Otherwise, let $\hat{x}^{(l)}$ be the optimal solution of $(\mathcal{T}^1_{x^{*(l)}})$; then $\hat{x}^{(l)} \in \mathcal{X}_{\mathcal{Q}}$ (see \cite{benson1978existence}).
	
	However, the second efficiency test for the program $(BIOLFP)$ is by solving the following program (for proof, see \cite{chaiblaine2020exact}):
	\[ (\mathcal{T}^2_{x^{*(l)}})\begin{cases}
		\max \,\, w_1 + w_2 \\
		(p^1 - \psi^1(x^{*(l)})q^1)x + w_1 \leq \psi^1(x^{*(l)})\beta^1 - \alpha^1 \\
		(p^2 - \psi^2(x^{*(l)})q^2)x + w_2 \leq \psi^2(x^{*(l)})\beta^2 - \alpha^2 \\
		w_1, w_2 \geq 0 \\
		x \in \mathcal{D}
	\end{cases}
	\]
	If both objective functions for the programs $(\mathcal{T}^1_{x^{*(l)}})$ and $(\mathcal{T}^2_{x^{*(l)}})$ are null, it means that the solution $x^{*(l)}$ is efficient for $(MOIQP)$ and $(BOILFP)$, and thus $x^{*(l)} \in \mathcal{X}_{Eff}$.
	
	\section{Description of Method}
	At each iteration $l$, the program $(LFP)_l$ is solved. The corresponding node $l$ is fathomed if $(LFP)_l$ becomes infeasible due to either $\mathcal{H}_l$ or $\mathcal{H'}_l$ being empty. However, if the optimal solution $x^{*(l)}$ is not integer, a fractional coordinate $x_j = x_j^{*(l)}$ is identified. The feasible set $\mathcal{X}_l$ is then partitioned into subsets $\mathcal{X}_{l_1}$ and $\mathcal{X}_{l_2}$, creating two new nodes in the search tree with additional constraints: $x_j \le \lfloor x_j^{*(l)} \rfloor$ for $(LFP)_{l_1}$ and $x_j \ge \lfloor x_j^{*(l)} \rfloor + 1$ for $(LFP)_{l_2}$, where $l_1>1$ and $ l_2 > l$.
	
	If $x^{*(l)}$ is integer, its efficiency for $(MOIQP)$ is tested by solving $(\mathcal{T}^1_{x^{*(l)}})$. If the efficiency for $(MOIQP)$ is guaranteed, then  its efficiency for $(BOILFP)$ is tested by solving $(\mathcal{T}^2_{x^{*(l)}})$. If $x^{*(l)}$ efficient for both, the set $\mathcal{X}_{Eff}$ is updated to include $x^{*(l)}$. If $x^{*(l)}$ is not efficient for one or both tests, sets $\mathcal{H}_l$ and $\mathcal{H'}_l$ are constructed. Efficient cuts \ref{eq1} and \ref{eq2} are added to the successor nodes of $l$, resulting in $\mathcal{X}_{l+1} = \{x \in \mathcal{X}_l | \sum_{j \in \mathcal{H}_l} \ge 1, \sum_{j \in \mathcal{H'}_l} \ge 1\}$.
	
	\subsection{ BLF/MOIQP Algorithm}
	The algorithm steps are detailed in the code below:
	\begin{algorithm}[h]
		\SetKwFunction{blfmoiqp}{blfmoiqp}
		\SetKwInOut{KwIn}{Input}
		\SetKwInOut{KwOut}{Ensure}
		\KwOut{$ \mathcal{X}_{Eff}$ the solution of $(BLF/MOIQP)$ }
		$\bullet $ \textbf{Step 1:Initialization}:$ \mathcal{X}_{Eff}=\emptyset, l=0,\, \mathcal{X}_{0}=\mathcal{X}$.\\
		\While{there is a non fathomed node $l$ }{
			$\bullet $ solve $(LFP)_l $ using simplex or dual simplex method.\\
			\eIf{($(LFP)_l$) has an optimal solution $x^{*(l)}$ }{
				\eIf{$x^{*(l)}$ is integer}{Solve ($\mathcal{T}^1_{x^{*(l)}}$);
					\If{the optimal value of the objective function of ($\mathcal{T}^1_{x^{*(l)}}$) is $0$}{Solve ($\mathcal{T}^2_{x^{*(l)}}$);
						\If{the optimal value of the objective function of ($\mathcal{T}^2_{x^{*(l)}}$) is $0$}{$ \mathcal{X}_{Eff}=\mathcal{X}_{Eff}\cup \{x^{*(l)}\}$}}
					Construct the sets $\mathcal{H}_l $ and $ \mathcal{H'}_l$\\
					\eIf{$\mathcal{H}_l=\emptyset $ or $ \mathcal{H'}_l=\emptyset$}{Fathom the node $l$}{add the cuts \ref{eq1} and \ref{eq2} to the successors of $l$} .
					
				}{Choose the index $k$ such $ x^{*(l)}_k$ is fractional. Then ,split the program $(LFP)_l $ into two subproblems, by adding respectively the constraints $x_k=\lfloor x^{*(l)}_k \rfloor $ and $x_k=\lfloor x^{*(l)}_k \rfloor+1 $  to obtain $(LFP)_{l1} $ and $(LFP)_{l2}\, (l_1\ge l+1,l_2\ge l+1 )$ and $ l_1\neq l_2$}	
			}{fathom the node $l$.}}
		\caption{Bi-objective  optimization over multi-objective integer quadratic efficient set.}
		\label{Algo}
	\end{algorithm}
	
	\subsection{Illustrative example}
	To illustrate how the algorithm $(BLF/MOIQP)$ works, we provide an example to optimize a linear fractional function over the efficient set of a Tri-OIQP problem. The steps for solving the problem are detailed and followed by the solution search tree [\ref{Tree1}].
	
	\[ (BLF/MOIQP)\begin{cases}
		\text{Min}\, \psi^1(x)=\dfrac{x_1-4x_2-x_3-7}{x_1+x_3+3} \\
		\text{Min}\, \psi^2(x)=\dfrac{-2x_1+x_2-3x_3-2}{x_1+x_2+x_3+2} \\
		x\in \mathcal{X}_{\mathcal{Q}}
	\end{cases}
	\]
	Where:
	$ \mathcal{X}_{\mathcal{Q}} $ is the efficient set of the following (Tri-OIQP):
	\[ (MOIQP)\begin{cases}
		\text{Min}\, f_i(x)=\frac{1}{2}x^tQ_ix+c_i^tx \quad i=\overline{1,3}\\
		Ax\le b\\
		x\in \mathbb{R}_+^3\cap\mathbb{Z}^3
	\end{cases}
	\]
	\begin{equation*}
		Q_1 =
		\begin{pmatrix}
			50 & 43&20\\
			43 & 42 &20\\
			20&20 &11 \\
		\end{pmatrix}
		\;\; ,
		c^t_1 =
		\begin{pmatrix}
			-94 & -74&-37 \\
		\end{pmatrix}
		\qquad
		Q_2 =
		\begin{pmatrix}
			33 & 22&25\\
			22 & 21 &18\\
			25   &18 &42\\
		\end{pmatrix}
		, \;\;
		c^t_2 =
		\begin{pmatrix}
			6 &90&-37 \\
		\end{pmatrix}
	\end{equation*}
	\begin{equation*}
		Q_3 =
		\begin{pmatrix}
			50 & 17&43\\
			17 & 6&15\\
			43 &  15  &38
		\end{pmatrix}
		, \;\;
		c^t_3 =
		\begin{pmatrix}
			36 & -20&-70
		\end{pmatrix}
	\end{equation*}
	And
	\begin{equation*}
		A =
		\begin{pmatrix}
			1&1&1 \\
			-1&2&3 \\
		\end{pmatrix}
		\;\;
		x^t =
		\begin{pmatrix}
			x_1 & x_2 & x_3 \\
		\end{pmatrix}
		\;\;
		b =
		\begin{pmatrix}
			3 \\
			6
		\end{pmatrix}
	\end{equation*}
	\begin{itemize}
		\item \textbf{Initialization:} We put $l=0,\,\mathcal{X}_{Eff}=\emptyset$
		\item \textbf{node $N_0$: }we solve the program $(LFP)_0 $, the first optimal solution obtained $ x^{*(0)}=(0,3,0)$. The results are summarized in the following table:
		
		\begin{table}[H]
			\centering
			\begin{tabular}{ccccc}
				\toprule
				$\mathcal{B}_5$ & $x_1$ &$x_3$ &$x_5$&$RHS$ \\
				\midrule
				$x_2$&-1/2 &3/2 &1/2 &3 \\
				$x_4$& 3/2& -1/2 &-1/2 & 0 \\
				\midrule
				$\bar{\gamma}^1$&16&34&6&-19/3\\
				$\bar{\gamma}^2$&-9&-22&-2&1/5\\
				\midrule
				$\bar{f}_1$&61&-55&-26&33\\
				$\bar{f}_2$&297/2&-425/2&-153/2&-738/2\\
				$\bar{f}_3$&86&-22&1&33\\
				\bottomrule
			\end{tabular}
			\caption{ Optimal simplex table for node $N_0$}
			\label{table:lfp0}
		\end{table}
		
		As the optimal solution $ x^{*(0)} $ is integer, we apply the efficiency test  ($\mathcal{T}^1_{x^{*(0)}}$). Since the objective function gives a nonzero value at optimality, then $x^{*(0)}\notin \mathcal{X}_{Eff}$. From table [\ref{table:lfp0}], the sets $ \mathcal{H}_0=\{3,5\}$ and $ \mathcal{H'}_0=\{1,3,5\}$, so the cuts:
		$$N_1: x_3+x_5\ge 1  \;\text{and }\;x_1+x_3+x_5\ge 1$$
		are added to the table[\ref{table:lfp0}] to obtain the node $ N_1$
		\item \textbf{node $N_1$}: the program $(LFP)_1 $ gives Table[\ref{table:lfp1}]
		
		\centering
		\begin{table}[H]
			\centering
			\begin{tabular}{ccccc}
				\toprule
				$\mathcal{B}_2$ & $x_1$ &$x_3$ &$x_7$&$RHS$ \\
				\midrule
				$x_2$&-1 &1 &1/2 &5/2 \\
				$x_4$& 2& 0 &-1/2 &1/2 \\
				$x_5$& 1&1 &-1 &  1\\
				$x_6$&1 & 0&-1 &  0\\
				\midrule
				$\bar{\gamma}^1$&8&26&6&-17/2\\
				\bottomrule
			\end{tabular}
			\caption{Optimal simplex table for node $ N_1$}
			\label{table:lfp1}
		\end{table}
		
		Since the optimal solution $ x^{*(1)}=(0,\frac{5}{2},0)$ is not an integer, we apply the branching process and two nodes $ N_2$ and $ N_3$ are created, with the following constraints:
		$$ N_3:x_2 \ge \lceil \frac{5}{2} \rceil $$
		$$ N_2:x_2\le \lfloor \frac{5}{2} \rfloor$$
		\item \textbf{node $  N_3$}: the added constraint $x_2 \ge \lceil \frac{5}{2} \rceil $ makes the augmented program $(LFP)_3 $ infeasible, then the node is fathomed.
		\item \textbf{node $  N_2$}: the constraint $ x_2\le \lfloor \frac{5}{2} \rfloor $ is added to the table [\ref{table:lfp1}] to obtain, after solving $(LFP)_2 $, the table [\ref{table:lfp2}] with solution $ x^{*(2)}=(0,2,0)$ .
		
		\begin{table}[H]
			\centering
			\begin{tabular}{ccccc}
				\toprule
				$\mathcal{B}_6$ & $x_1$ &$x_3$ &$x_8$&$RHS$ \\
				\midrule
				$x_2$&0 &0 &1 &2 \\
				$x_4$& 1& 1 &-1 & 1 \\
				$x_5$& -1 &3 &-2 & 2\\
				$x_6$&-1 &2 &-2 &1 \\
				$x_7$&-2 &2 &-2&1 \\
				\midrule
				$\bar{\gamma}^1$&18&12&12&-5\\
				$\bar{\gamma}^2$&-9&-12&-4&0\\
				\midrule
				$\bar{f}_1$&-8&3&-10&64\\
				$\bar{f}_2$&50&-1&-132&-222\\
				$\bar{f}_3$&70&-40&8&28\\
				\bottomrule
			\end{tabular}
			\caption{Optimal simplex table for node $N_2$ }
			\label{table:lfp2}
		\end{table}
		
		The solution $  x^{*(2)}$ is an integer, we test the efficiency by solving ($\mathcal{T}^1_{x^{*(2)}}$). The solution is not efficient; however, $  x^{*(2)} \notin \mathcal{X}_{Eff}$ , From Table [\ref{table:lfp2}], the sets $\mathcal{H}_2=\mathcal{H'}_2=\{1,3,8\} $ then we apply the efficient cut $$ N_4 :x_1+x_3+x_8\ge 1 $$.
		\item \textbf{node $N_4 $}
		The results of the resolution of $(LFP)_4 $ are shown in table [\ref{table:lfp3}].
		
		\begin{table}[H]
			\centering
			\begin{tabular}{ccccc}
				\toprule
				$\mathcal{B}_2$ & $x_1$ &$x_6$ &$x_9$&$RHS$ \\
				\midrule
				$x_2$&-3/4 &1/4 &1/2 &7/4 \\
				$x_3$& 1/4& 1/4 &-1/2 &3/4 \\
				$x_4$& 3/2&-1/2 &0 &  1/2\\
				$x_5$&-1/4 &-5/4 &1/2 &1/4 \\
				$x_7$& -1& -1 &0 &0 \\
				$x_8$& 3/4&-1/4 &-1/2 & 1/4\\
				\midrule
				$\bar{\gamma}^1$&9/2&1&13&-59/15\\
				\bottomrule
			\end{tabular}
			\caption{Optimal simplex table for node $ N_4$}
			\label{table:lfp3}
		\end{table}
		
		The solution $x^{*(4)}$ is not integer; the branching process is also applied with the following constraints:
		$$ N_5:x_2 \le \lceil \frac{7}{4} \rceil $$
		$$ N_6:x_2\ge \lfloor \frac{7}{4} \rfloor$$
		After solving the corresponding problems $(LFP)_5$ and $(LFP)_6$, we obtain the following results:
		
		\item \textbf{node $N_6$}: the obtained optimal solution $x^{*(6)}=(\frac{1}{3},2,\frac{2}{3})$. As $x^{*(6)}$ is not an integer, a branching process is applied. All descending nodes of the solution search tree do not contain any efficient solutions (see [\ref{Tree1}]).
		\item \textbf{node $N_5$}: the obtained optimal solution $x^{*(5)}=(0,1,0)$ is integer, so we solve the efficiency tests ($\mathcal{T}^1_{x^{*(5)}}$) and ($\mathcal{T}^2_{x^{*(5)}}$). Both objectives give zero value at optimality. Hence, $\mathcal{X}_{Eff}=\mathcal{X}_{Eff}\cup \{x^{*(5)} \}$. From table [\ref{table:lfp4}], the set $\mathcal{H}_5=\{1,9,10\}$ and $\mathcal{H'}_5=\{9\}$. The efficient cuts $$ N_7:x_1+x_9+x_{10}\ge 1  \,\text{and}\, x_9\ge1 $$
		are added to table [\ref{table:lfp4}] to get the program $(LFP)_7$.
		
		\begin{table}[H]
			\centering
			\begin{tabular}{ccccc}
				\toprule
				$\mathcal{B}_6$ & $x_1$ &$x_9$ &$x_{10}$&$RHS$ \\
				\midrule
				$x_2$&0 &0 &1 &1 \\
				$x_3$&1& -1 &1 & 0 \\
				$x_4$&0 &1 &-2 &2\\
				$x_5$&-4 &3 &-5 &4 \\
				$x_6$&-3 &2 &-4&3 \\
				$x_7$&-4 &2 &-4 &3 \\
				$x_8$&0 &0 &-1 &1 \\
				\midrule
				$\bar{\gamma}^1$&6&8&4&-11/3\\
				$\bar{\gamma}^2$&3&-8&4&-1/3\\
				\midrule
				$\bar{f}_1$&-34&-17&49&53\\
				$\bar{f}_2$&47&-19&-92&-201/2\\
				$\bar{f}_3$&108&-55&69&17\\
				\bottomrule
			\end{tabular}
			\caption{Optimal simplex table for node $N_5$}
			\label{table:lfp4}
		\end{table}
		
		\item \textbf{node $N_7$}: After solving $(LFP)_7$, we obtain the optimal integer solution $x^{*(7)}=(0,1,1)$ which is efficient for ($\mathcal{T}^1_{x^{*(7)}}$) and ($\mathcal{T}^2_{x^{*(7)}}$). Hence, $\mathcal{X}_{Eff}=\mathcal{X}_{Eff}\cup \{x^{*(7)}\}$. From table [\ref{table:lfp7}], the set $\mathcal{H}_7=\{10,11,12\}$ and $\mathcal{H'}_6=\{10,11\}$. The efficient cuts $ x_{10}+x_{11}+x_{12}\ge 1  \,\text{and}\, x_{10}+x_{11}\ge 1$
		are added to table [\ref{table:lfp4}] in order to get the next nodes.
		
		\begin{table}[H]
			\centering
			\begin{tabular}{ccccc}
				\toprule
				$\mathcal{B}_6$ & $x_{10}$ &$x_{11}$ &$x_{12}$&$RHS$ \\
				\midrule
				$x_1$&1 &-1 &1 &0 \\
				$x_2$&1& 0 &0 & 1 \\
				$x_3$&0 &1 &-2 &1\\
				$x_4$&-2 &0 &1 &1 \\
				$x_5$&-1 &-4 &7&1 \\
				$x_6$&-1 &-3 &5 &1 \\
				$x_7$&0 &-4 &6 &1 \\
				$x_8$&-1 &0 &0 &1 \\
				$x_9$&0& 0 &-1 & 1 \\
				\midrule
				$\bar{\gamma}^1$&0&8&0&-3\\
				$\bar{\gamma}^2$&-4&4&-12&-1\\
				\midrule
				$\bar{f}_1$&43&-25&19&129/2\\
				$\bar{f}_2$&-182&30&-7&-205/2\\
				$\bar{f}_3$&-97&-113&-130&53\\
				\bottomrule
			\end{tabular}
			\caption{Optimal simplex table for node $N_7$}
			\label{table:lfp7}
		\end{table}
		
		The resolution process for the next nodes is presented in the following solution search tree.
	\end{itemize}
	
\begin{figure}
  \centering
  % Requires \usepackage{graphicx}
  \includegraphics[width=10cm]{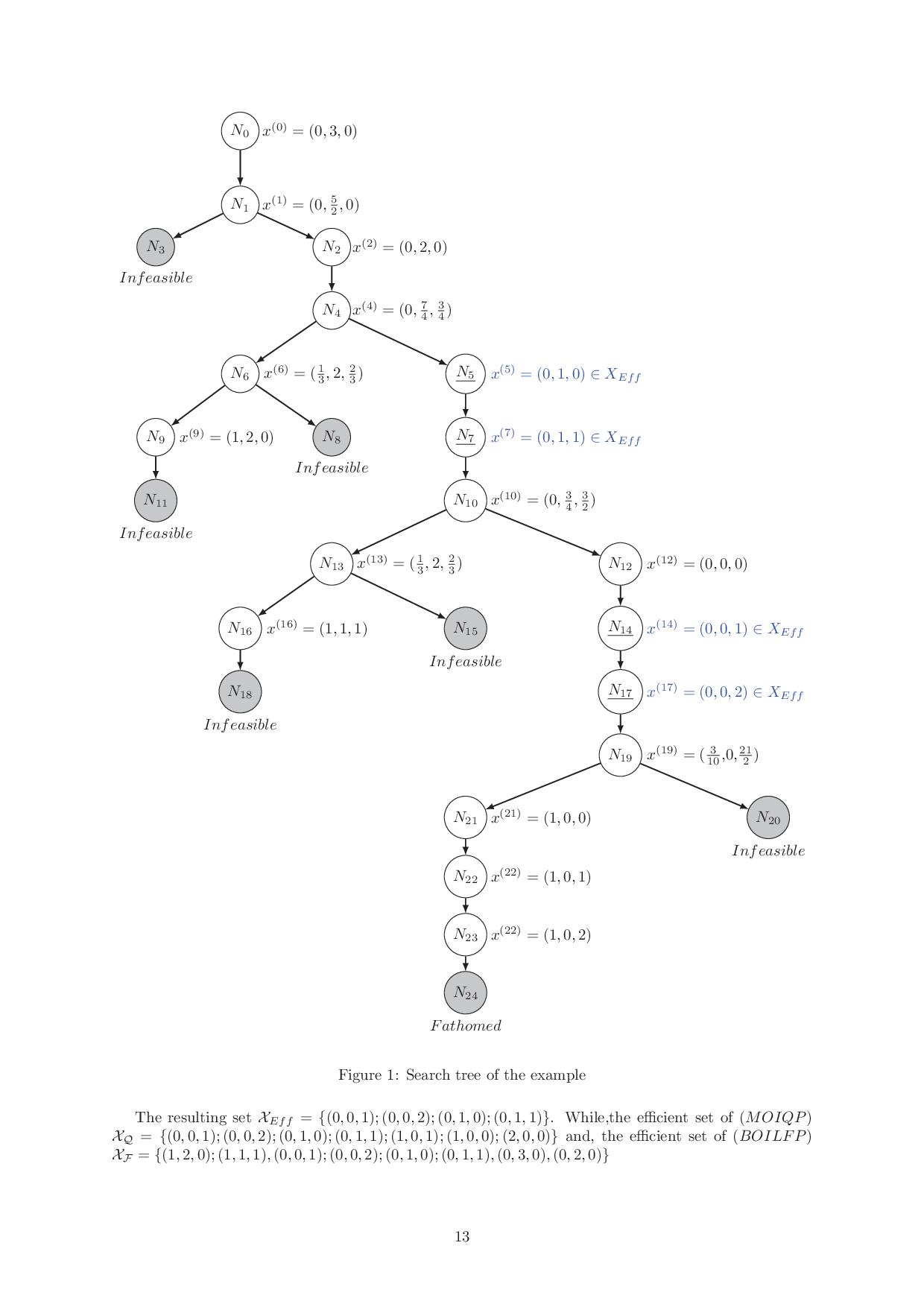}\\
  \caption{Search tree of the example}\label{Tree}
\end{figure}

	The resulting set $\mathcal{X}_{Eff} =\{(0,0,1);(0,0,2);(0,1,0);(0,1,1)\}$. While,the efficient set of $(MOIQP)$  $ \mathcal{X}_{\mathcal{Q}}=\{ (0,0,1);(0,0,2);(0,1,0);(0,1,1);(1,0,1);(1,0,0);(2,0,0)\}$ and, the efficient set of $(BOILFP)$ $\mathcal{X}_{\mathcal{F}}=\{ (1,2,0);(1,1,1),(0,0,1);(0,0,2);(0,1,0);(0,1,1),(0,3,0),(0,2,0)\} $
	\section{Conclusion}
	In this paper, we presented an exact algorithm designed for optimizing two fractional linear preference functions over the efficient set of a multi-objective integer quadratic problem. The flexibility of the algorithm allows for easy modification to optimize two linear preference functions as well. Our approach lays the foundation for addressing more challenging decision problems, particularly in the realms of game theory and other optimization fields. Future work will focus on extending the applicability of the algorithm to tackle diverse problem scenarios and contribute to advancements in decision-making methodologies.

	\bibliographystyle{apalike}

\end{document}